\newcommand{\R}{\mathds{R}}
\newcommand{\C}{\mathds{C}}
\newcommand{\CP}{\mathds{C}\mathrm{P}}
\newcommand{\CH}{\mathds{C}\mathrm{H}}
\newcommand{\K}{K\"{a}hler}
\newcommand{\tr}{\operatorname{tr}}
\newcommand{\Kill}{{\mathrm{Kill}}}
\newcommand{\ad}{\operatorname{ad}}
\newcommand{\Ad}{\operatorname{Ad}}
\newcommand{\diag}{\operatorname{diag}}
\newcommand{\sgn}{{\operatorname{sgn\,}}}
\newtheorem{theor}{Theorem}[section]
\newtheorem{defin}{Definition}
\newtheorem{lem}[theor]{Lemma}
\newtheorem{cor}{Corollary}
\newtheorem{remark}{Remark}
\begin{document}
\title[Universal embeddings of flag manifolds and rigidity phenomena]{Universal embeddings of flag manifolds and rigidity phenomena}

\author{Andrea Loi}
\address{(Andrea Loi) Dipartimento di Matematica \\ Universit\`a di Cagliari (Italy)}
\email{loi@unica.it}

\author{Roberto Mossa}
\address{(Roberto Mossa) Dipartimento di Matematica \\ Universit\`a di Cagliari (Italy)}
\email{roberto.mossa@unica.it}

\author{Fabio Zuddas}
\address{(Fabio Zuddas) Dipartimento di Matematica \\ Universit\`a di Cagliari (Italy)}
\email{fabio.zuddas@unica.it}

\thanks{The authors are supported by INdAM and GNSAGA - Gruppo Nazionale per le Strutture Algebriche, Geometriche e le loro Applicazioni; by GOACT - Funded by Fondazione di Sardegna; and partially funded by PNRR e.INS Ecosystem of Innovation for Next Generation Sardinia (CUP F53C22000430001, codice MUR ECS00000038).}

\keywords{Holomorphic isometries, \K\ immersions, rigidity, weak relatives,
homogeneous bounded domains, flag manifolds, classical flag manifolds,
\K-Ricci solitons}

\subjclass[2020]{53C55, 
                 32M10, 
                 53C30, 
                 32Q15, 
                 53C44} 

\begin{abstract}
We prove a universal embedding theorem for flag manifolds: \emph{every flag manifold admits a holomorphic isometric embedding into an irreducible classical flag manifold}. This result generalizes the classical celebrated embedding theorems of Takeuchi \cite{Take78} and Nakagawa--Takagi \cite{NaTa76}.
Using this embedding, we establish new rigidity phenomena for holomorphic isometries between homogeneous \K\ manifolds.  As a first immediate  consequence we show the triviality of a \K-Ricci soliton submanifod of  $\mathcal C \times \Omega$, where $\mathcal C$ is a flag manifold and $\Omega$ is a homogeneous bounded domain.

Secondly, we show that no \emph{weak-relative} relationship can occur among the fundamental classes of homogeneous \K\ manifolds: flat spaces, flag manifolds, and homogeneous bounded domains. Two \K\ manifolds are said to be \emph{weak relatives} if they share, up to local isometry, a common \K\ submanifold of complex dimension at least two. Our main result precisely shows that if $\mathcal{E}$ is (possibly indefinite) flat, $\mathcal{C}$ is a flag manifold, and $\Omega$ is a homogeneous bounded domain, then:
\begin{enumerate}
  \item $\mathcal{E}$ is not weak relative to $\mathcal{C}\times\Omega$;
  \item $\mathcal{C}$ is not weak relative to $\mathcal{E}\times\Omega$;
  \item $\Omega$ is not weak relative to $\mathcal{E}\times\mathcal{C}$.
\end{enumerate}
This extends, in two independent directions, the rigidity theorem of Loi-Mossa \cite{LMrighom}: we pass from \emph{relatives} to the more flexible notion of \emph{weak relatives} and dispense with the earlier ``special'' restriction on the flag-manifold factor. This result also unifies previous rigidity results from the literature, e.g., \cite{Cal,Cheng2021,CDY,diloi,HY,HYbook,UmearaC}.\end{abstract}

\maketitle
\tableofcontents  

\section{Introduction}\label{intro}
The study of holomorphic isometric embeddings between homogeneous \K\ manifolds occupies a central place in complex differential geometry, as it has interactions between curvature, symmetry, and complex structure. Recall that a \K\  manifold $S$ is called \emph{homogeneous} if its group of biholomorphic isometries acts transitively on $S$. Fundamental classes are homogeneous bounded domains, flat homogeneous \K\ manifolds, and generalized flag manifolds (hereafter \emph{flag manifolds}); the latter are compact, simply connected, homogeneous \K\ manifolds. A \emph{homogeneous bounded domain} is a bounded domain $\Omega\subset\C^n$ endowed with a homogeneous \K\ metric $g_\Omega$ (for instance the Bergman metric when $\Omega$ is a bounded symmetric domain). Note that a given bounded domain may admit many distinct homogeneous \K\  metrics not equivalent to the Bergman metric; see \cite{DISCALALOIISHI} for details. Holomorphic isometric embeddings between such spaces have led to celebrated results, beginning with the foundational work of Calabi~\cite{Cal} (see also~\cite{LoiZedda-book}), and continuing with the classical embedding theorems of Takeuchi~\cite{Take78} and Nakagawa--Takagi~\cite{NaTa76}. The problem of embedding a given homogeneous \K\ manifold into a more symmetric or higher-dimensional ambient space plays a pivotal role in the classification and rigidity theory of \K\ manifolds with large symmetry groups.
We also recall the  foundational contributions of Mok to the theory of holomorphic isometries in the noncompact setting;
we refer the reader to the following (non-exhaustive) references \cite{Mok1989, Mok2012, Mok2016}.
The reader is further referred to the recent work of Ishi~\cite{Ishi24}, where it is shown that any simply-connected homogeneous domain in $\C^n$ admits a holomorphic isometric embedding into a Siegel--Jacobi domain. For the infinite-dimensional setting, in which the target is a complex Hilbert space form, we refer to~\cite{DISCALALOIISHI}.

In this paper, we prove a universal holomorphic isometric embedding theorem for flag manifolds (see Theorem \ref{mainteoremb} below), which generalizes and extends the classical embedding results  of Takeuchi and Nakagawa--Takagi mentioned above.
In those works the ambient space is the complex projective space $\C P^N$
equipped with the Fubini-Study metric; consequently, an embedding is possible
only if the \K\ form of the source flag manifold is integral.
Moreover, \cite{NaTa76} assumes that the source is a
Hermitian symmetric space of compact type.
Our main theorem removes all these requirements: it applies to
\emph{every} source  flag manifold, and allows a
classical irreducible flag manifold of any type as  ambient space.

\begin{theor}\label{mainteoremb}
Let $\mathcal C$ be a flag manifold.  
Fix one of the classical groups
\[
   \tilde G\;\in\;\bigl\{\mathrm{SU}(N),\ \mathrm{SO}(N),\ \mathrm{Sp}(N)\bigr\}.
\]
Then, for some \emph{sufficiently large} integer \(N\),  
there exist an irreducible classical flag manifold  
\(\tilde{\mathcal C}=\tilde G/\tilde K\) 
and a holomorphic isometric embedding  
$\Phi :\; \mathcal C \longrightarrow \tilde{\mathcal C}.$
\end{theor}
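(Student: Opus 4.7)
The plan is to realize $\mathcal{C}$ with its given \K\ metric as a coadjoint orbit of $G$ equipped with the Kirillov--Kostant--Souriau (KKS) form, and then to transfer this orbit into $\tilde{\mathfrak{g}}$ via a faithful representation $\rho: G \to \tilde{G}$, producing $\tilde{\mathcal{C}}$ as an ambient coadjoint orbit of $\tilde{G}$. By the classical structure theory of compact simply connected homogeneous \K\ manifolds (Wang--Matsushima--Borel), we write $\mathcal{C} = G/K$ with $G$ compact semisimple and $K$ the centralizer of a torus; the given invariant \K\ form is then the KKS form $\omega_\xi$ for some $\xi \in \mathfrak{z}(\mathfrak{k})^*$ lying in an open ``positivity cone'' that also determines the compatible $G$-invariant complex structure through its parabolic $\mathfrak{p}_\xi \subset \mathfrak{g}^{\C}$.

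Next, construct a faithful $\rho: G \to \tilde{G}$ for the prescribed classical type: a faithful unitary representation handles $\tilde{G} = \mathrm{SU}(N)$; the doubled representation $V \oplus V^*$ equipped with its canonical real, respectively quaternionic, structure handles $\mathrm{SO}(N)$ and $\mathrm{Sp}(N)$, enlarging $N$ as needed. Since $d\rho$ is injective, the dual map $d\rho^*: \tilde{\mathfrak{g}}^* \twoheadrightarrow \mathfrak{g}^*$ is surjective, so we may select $\tilde\xi \in \tilde{\mathfrak{g}}^*$ with $d\rho^*\tilde\xi = \xi$. Fix a maximal torus $T \subset K$, extend it to a maximal torus $\tilde{T}$ of $\tilde{G}$ containing $\rho(T)$, and choose root orderings of $\tilde{\mathfrak{g}}$ whose restrictions recover those of $\mathfrak{g}$. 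By averaging $\tilde\xi$ over $\rho(K)$ and then adjusting within the kernel $\ker(d\rho^*)$ by an element that pushes it into the Weyl chamber determined by $\xi$, we arrange that $\tilde\xi$ is simultaneously $\rho(K)$-invariant and lies in the corresponding open positivity cone of $\tilde{\mathfrak{g}}^*$; this yields the parabolic compatibility $d\rho^{\C}(\mathfrak{p}_\xi) \subseteq \tilde{\mathfrak{p}}_{\tilde\xi}$.

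Finally, let $\tilde{K}$ denote the $\tilde{G}$-stabilizer of $\tilde\xi$: then $\rho(K) \subseteq \tilde{K}$ and $\tilde{\mathcal{C}} := \tilde{G}/\tilde{K}$ is an irreducible classical flag manifold, using the simplicity of $\tilde{G}$. Equip $\tilde{\mathcal{C}}$ with the KKS form $\tilde\omega_{\tilde\xi}$ and set $\Phi(gK) := \rho(g)\tilde{K}$. Holomorphicity of $\Phi$ follows at once from the parabolic compatibility arranged above, while isometry reduces to a pointwise identity at the basepoint,
\[
(\Phi^*\tilde\omega_{\tilde\xi})(X,Y) \;=\; \tilde\xi(d\rho[X,Y]) \;=\; (d\rho^*\tilde\xi)([X,Y]) \;=\; \xi([X,Y]) \;=\; \omega_\xi(X,Y),
\]
which extends to all of $\mathcal{C}$ by $G$-equivariance of $\Phi$. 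The principal obstacle lies in the selection of $\tilde\xi$: the three simultaneous requirements --- prescribed image under $d\rho^*$, $\rho(K)$-invariance, and placement in the correct Weyl chamber of $\tilde{\mathfrak{g}}^*$ --- demand careful root-system bookkeeping, especially when $\mathfrak{g}$ decomposes into several simple factors with differing Dynkin indices relative to $\rho$. The freedom to enlarge $N$ by augmenting $\rho$ with additional faithful representations is precisely what provides the flexibility needed to balance these constraints and complete the argument.
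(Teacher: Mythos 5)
Your plan is, at heart, the paper's argument transcribed into coadjoint-orbit language: $\mathcal{C}=G/K$ with $K=C_G(Z_0)$, a faithful $\rho\colon G\to\tilde G$ (with a doubling construction for the $\mathrm{SO}$/$\mathrm{Sp}$ targets, as in the paper), aligned maximal tori, and the observation that once the momenta match, the form comparison is a one-line computation at the basepoint. The genuine gap is that you never actually produce the element $\tilde\xi$ on which everything rests. You correctly identify the three simultaneous constraints (prescribed restriction $d\rho^*\tilde\xi=\xi$, $\rho(K)$-invariance, and the positivity condition forcing $d\rho^{\C}(\mathfrak p_\xi)\subseteq\tilde{\mathfrak p}_{\tilde\xi}$), call their reconciliation ``the principal obstacle'', and then leave it to ``careful root-system bookkeeping'' and the freedom to enlarge $N$. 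The proposed recipe --- average over $\rho(K)$, then translate by an element of $\ker(d\rho^*)$ into the correct cone --- is not shown to terminate: nothing in your argument guarantees that the affine slice $(d\rho^*)^{-1}(\xi)$ meets the closed convex cone of functionals whose parabolic contains $d\rho^{\C}(\mathfrak p_\xi)$, and a generic translation inside the kernel destroys the $\rho(K)$-invariance you have just arranged. Since holomorphicity, the inclusion $\rho(K)\subseteq\tilde K$, and the injectivity of $\Phi$ (needed for an \emph{embedding}, and not addressed in your write-up) all hinge on this one choice, the proof is incomplete as it stands.

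The resolution, which is what the paper does, is to push forward the element rather than pull back the functional: take $\tilde Z_0:=d\rho(Z_0)$ and $\tilde K:=C_{\tilde G}(\tilde Z_0)$. Then every root $\tilde\alpha$ occurring in $d\rho(\mathfrak g^{\alpha})$ satisfies $\tilde\alpha(d\rho(H))=\alpha(H)$, so colours and the signs $\sgn(\alpha(iZ_0))$ are preserved verbatim; this yields $d\rho(\mathfrak m)\subseteq\tilde{\mathfrak m}$, $\rho(K)\subseteq\tilde K$, holomorphicity, and injectivity of $\Phi$ essentially for free. The price is that $\Kill_{\tilde{\mathfrak g}}(d\rho(\cdot),d\rho(\cdot))=c\,\Kill_{\mathfrak g}(\cdot,\cdot)$ for a positive constant $c$ (Lemma~\ref{Killingfund}), so the pulled-back form is $c\,\omega$; one absorbs $c$ by rescaling the ambient \K\ form (equivalently, replacing $\tilde Z_0$ by $c^{-1}d\rho(Z_0)$), which is legitimate because the theorem allows you to choose the invariant metric on $\tilde{\mathcal C}$. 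In the reducible case these constants differ from factor to factor, and the paper handles this by weighting each $Z_j$ separately before block-embedding into $\mathrm{SU}(N)$ --- exactly the issue of differing Dynkin indices that you flag but do not resolve. If you make the explicit choice $\tilde\xi=\Kill_{\tilde{\mathfrak g}}\bigl(\textstyle\sum_j c_j^{-1}d\rho(Z_j),\cdot\bigr)$, your scheme closes and coincides with the paper's proof.
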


This result not only refines previous  embedding theorems but it also has important implications 
for the rigidity of \K\ submanifolds (for an overview of several rigidity questions and further references, see \cite{LMrighom}). 

The first immediate consequence is the following corollary for induced \K-Ricci solitons (KRS).

\begin{cor}\label{corolmainteor}
Let $(g,X)$ be a KRS on a \K\ submanifold of  $\mathcal C \times \Omega$, where $\mathcal C$ is a flag manifold and $\Omega$ is a homogeneous bounded domain.  
Then the soliton is \emph{trivial}, i.e.\ $g$ is \K-Einstein.  
In particular, any KRS induced by a flag manifold $\mathcal C$ is trivial.
\end{cor}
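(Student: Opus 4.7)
The proof plan is short: use Theorem~\ref{mainteoremb} to promote the ambient flag factor $\mathcal{C}$ to an irreducible classical flag manifold, then invoke an already-available KRS-triviality result in that more structured setting.

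Concretely, let $(S,g,X)$ be a KRS realized as a \K\ submanifold $\iota: S \hookrightarrow \mathcal{C}\times\Omega$, so that
\begin{equation*}
\Ric(g)-\lambda g \;=\; \Lie_X g \qquad \text{on } S.
\end{equation*}
Apply Theorem~\ref{mainteoremb} to produce a holomorphic isometric embedding $\Phi: \mathcal{C}\to\tilde{\mathcal{C}}$ with $\tilde{\mathcal{C}}$ an irreducible classical flag manifold. The product map $\Phi\times\Id_\Omega: \mathcal{C}\times\Omega\to\tilde{\mathcal{C}}\times\Omega$ is itself a holomorphic isometric embedding, so the composition $(\Phi\times\Id_\Omega)\circ\iota$ realizes $(S,g,X)$ as a KRS \K\ submanifold of $\tilde{\mathcal{C}}\times\Omega$; since both $g$ and $X$ are intrinsic to $S$ and are untouched by post-composition with an isometry, the soliton identity displayed above is preserved.

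At this point the triviality claim reduces to the already-established rigidity phenomena for KRS induced by products of an \emph{irreducible classical} flag manifold with a homogeneous bounded domain, as developed in \cite{LMrighom} and related works such as \cite{Cheng2021,CDY,diloi,HY,HYbook,UmearaC}. Those arguments exploit the antagonism between the positive curvature of the flag factor and the negative curvature of $\Omega$ through the behaviour of the ambient diastasis (or K\"ahler potential), and force the holomorphic vector field $X$ to be Killing for $g$, whence $\Ric(g)=\lambda g$ and $(S,g)$ is \KE. The ``in particular'' statement is then simply the degenerate case in which the $\Omega$-factor collapses to a point, for which the very same argument applies verbatim.

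The only delicate point I foresee is bookkeeping: verifying that the cited KRS-triviality statements are available with no residual ``special'' hypothesis on the flag ambient, uniformly across all three series $\tilde G\in\{\mathrm{SU}(N),\mathrm{SO}(N),\mathrm{Sp}(N)\}$ of classical types. If one of those references requires an auxiliary condition, one has to inspect the underlying diastasis/polarisation computation to confirm it goes through for the full classical list. The genuine geometric work, however, sits entirely inside Theorem~\ref{mainteoremb}; once the universal embedding is in hand, the corollary is a one-line reduction.
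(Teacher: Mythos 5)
Your proposal is correct and follows essentially the same route as the paper: embed $\mathcal C\times\Omega$ holomorphically and isometrically into $\tilde{\mathcal C}\times\Omega$ via Theorem~\ref{mainteoremb} and then apply \cite[Theorem~1.1]{LMrighom} and \cite[Corollary~1]{LMrighom} verbatim. Your closing worry about a residual ``special'' hypothesis is moot, since in \cite{LMrighom} the classical flag manifolds are precisely (part of) the ``special'' class, so the cited triviality results apply to $\tilde{\mathcal C}\times\Omega$ without further checking.
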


Corollary~\ref{corolmainteor} generalizes both
\cite[Theorem~1.1]{LMrighom} and
\cite[Corollary~1]{LMrighom}, where the ambient factor was required to be ``special''.  
The strategy is to embed the product
$
\mathcal C \times \Omega \;\hookrightarrow\; \tilde{\mathcal C} \times \Omega,
$
holomorphically and isometrically, where $\tilde{\mathcal C}$ is a classical flag manifold.
The existence of such an embedding is guaranteed by Theorem~\ref{mainteoremb}; thus, the cited results apply verbatim and yield the triviality of the soliton.

\begin{remark}\label{rem:cannot-extend-flat}\rm
The conclusion of Corollary~\ref{corolmainteor} fails if one replaces the factor $\Omega$ (or $\mathcal C$) by a flat \K\ manifold. Indeed, as observed in \cite[Remark~1]{LMrighom}, there exist nontrivial homogeneous KRS on $\C\times\CP^1$ and on $\C\times\CH^1$. 
\end{remark}
For completeness, although these facts are not used in our proofs, we note that a KRS induced by a flat (possibly indefinite) space is \K-Einstein and hence flat, by combining \cite[Theorem~1]{LMpams} with Umehara's theorem~\cite{UmearaEinstein}, while a KRS induced by a homogeneous bounded domain is \K--Einstein by \cite[Theorem~1.1(i)]{PRIMO} (cf. also Lemma \ref{lemmaricciflat} below).

Our second main rigidity contribution is  Theorem \ref{mainteorrel} below, where  we examine rigidity questions for pairs of K\"ahler manifolds that are relatives, or weak relatives, in the sense of Di~Scala--Loi \cite{diloi}. We recall the definitions.
 
\begin{defin}[relatives and weak relatives]\label{defrel}
Let $S_1$ and $S_2$ be K\"ahler manifolds.
\begin{enumerate}[(1)]
  \item $S_1$ and $S_2$ are \emph{relatives} if there exists a K\"ahler manifold $M$
        and holomorphic isometries $\varphi_i \colon M \to S_i$ for $i=1,2$.
  \item $S_1$ and $S_2$ are \emph{weak relatives} if there exist
        two locally isometric (not necessarily biholomorphic) K\"ahler manifolds $M_1$ and $M_2$ 
        and holomorphic isometries $\varphi_i \colon M_i \to S_i$ for $i=1,2$.
\end{enumerate}
\end{defin}

Clearly, any pair of relative \K\ manifolds is also a pair of weak relatives.
Moreover, if $M_1$ and $M_2$ are one-dimensional K\"ahler manifolds and $f \colon M_1 \to M_2$ is an isometry, then $f$ is automatically holomorphic or anti\-holomorphic (see  \cite[Lemma 6]{Placini2023}). Thus, in complex dimension~$1$ the notions of weak relatives and relatives coincide.
In general, however, weak relatives need not be relatives. Examples can be constructed in complex dimension~$2$; see \cite{Placini2023}. It is presently unknown whether there exist weak relatives K\"ahler manifolds of complex dimension $>2$ that are not relatives.  For up-to-date results on relative K\"ahler manifolds we refer to the survey \cite{YUANYUAN}; see also \cite{ChengHaoYuanZhang2024,LMblowup}. For the case of weak relatives, consult \cite{diloi,Placini2023}.

\begin{theor}\label{mainteorrel}
Let $\mathcal E$ be a (possibly indefinite) flat \K\ manifold, let $\mathcal C$ be a flag manifold, and let $\Omega$ be a homogeneous bounded domain. Then:
\begin{itemize}
\item[(i)] $\mathcal E$ is not weak relative to the \K\ product $\mathcal C \times \Omega$;
  \item[(ii)] $\mathcal C$ is not weak relative to the \K\ product $\mathcal E \times \Omega$;
  \item[(iii)] $\Omega$ is not weak relative to the \K\ product $\mathcal E \times \mathcal C$.
\end{itemize}
\end{theor}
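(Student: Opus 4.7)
The plan is, for all three items, to reduce the problem to the case in which the flag factor is a classical irreducible flag manifold and then to adapt the rigidity arguments of Loi--Mossa \cite{LMrighom} (together with \cite{Cal,UmearaC,HY,HYbook,diloi,Cheng2021,CDY}) to the weak-relative hypothesis. The reduction step is provided precisely by Theorem \ref{mainteoremb}: given a holomorphic isometric embedding of a flag manifold $\mathcal C$, we compose with the universal embedding $\Phi\colon\mathcal C\hookrightarrow\tilde{\mathcal C}$. In case (i) this converts a weak-relative pair $\varphi_i\colon M_i\to S_i$ with $S_1=\mathcal E$ and $S_2=\mathcal C\times\Omega$ into a weak-relative pair with target $\tilde{\mathcal C}\times\Omega$; in case (ii) it replaces $\mathcal C$ by $\tilde{\mathcal C}$ on the source side; and in case (iii) it replaces $\mathcal C$ by $\tilde{\mathcal C}$ in the product $\mathcal E\times\mathcal C$. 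Proving each item under the additional hypothesis that the flag factor is classical irreducible therefore implies the full statement, and this is exactly what allows us to dispense with the \emph{special} restriction of \cite{LMrighom}.

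For the reduced problem, the strategy exploits the sign of the Ricci curvature to separate the three classes: $\mathcal E$ is Ricci-flat, $\tilde{\mathcal C}$ has positive-definite Ricci on every simple factor, and $\Omega$ has strictly negative Ricci. In case (i), the goal is to show that any K\"ahler submanifold of $\tilde{\mathcal C}\times\Omega$ that is Riemannian-isometric to a K\"ahler submanifold of flat $\mathcal E$ must project trivially to the $\tilde{\mathcal C}$-factor, whence the rigidity of holomorphic isometries between flat spaces and bounded domains (Calabi, Umehara) forces the common submanifold to be a point. Cases (ii) and (iii) follow by symmetric confrontations: in (ii) the positive Ricci of $\tilde{\mathcal C}$ is incompatible with the non-positive Ricci available on $\mathcal E\times\Omega$ in the presence of a common submanifold of complex dimension $\geq 2$, and in (iii) the strictly negative Ricci of $\Omega$ is incompatible with the non-negative Ricci available on $\mathcal E\times\tilde{\mathcal C}$.

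The main obstacle is the upgrade from relatives to weak relatives. Classical proofs of this type rest on Calabi diastasis identities, i.e.\ on hermitian potentials whose equality between two embeddings is \emph{a priori} stronger than just sharing a Riemannian metric; under a mere Riemannian isometry $f\colon M_1\to M_2$ the diastasis functions need not be identified. The remedy, following the line of \cite{diloi,Placini2023}, is to replace diastasis-based identities by identities between purely Riemannian invariants of the curvature---the full Riemann tensor $R$, its covariant derivatives $\nabla^kR$, and the Ricci tensor---all of which are preserved by $f$. The delicate technical point is that the product decomposition on $S_2$'s side must be detected at the Riemannian level in order to isolate the flag-manifold contribution from the bounded-domain (or flat) one; once this is achieved, the diastasis-based conclusions of \cite{LMrighom} can be reproduced under the weaker hypothesis, and the three items of the theorem follow in a unified way.
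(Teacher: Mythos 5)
Your reduction via Theorem \ref{mainteoremb} is exactly the paper's first move, but both of your subsequent steps have genuine problems. The ``Ricci-sign confrontation'' you propose for the reduced relatives case does not work: the Ricci curvature of a \K\ submanifold is not controlled in sign by the ambient Ricci (a smooth hypersurface of large degree in $\CP^n$ is a \K\ submanifold of a positive \KE\ manifold yet has negative Ricci), so one cannot exclude a common submanifold of, say, $\mathcal E\times\Omega$ and $\tilde{\mathcal C}$ by comparing signs of Ricci on the two sides; the paper's own remark after Theorem \ref{mainteorrel} stresses precisely that factor-by-factor reasoning of this kind fails. The paper does not re-prove the relatives case at all: it invokes the rigidity theorem \cite[Thm.~1.2]{LMrighom}, which is already established for classical flag manifolds, to rule out relatives for the three reduced pairs.

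More importantly, your mechanism for upgrading weak relatives to relatives --- replacing diastasis identities by identities of Riemannian curvature invariants --- is left unspecified and misses the step that actually makes the upgrade possible. The paper's argument is: take the de Rham decomposition $M_1=F\times N_1\times\cdots\times N_k$ of the common submanifold, with $F$ collecting the Ricci-flat factors; show $\dim F=0$ by combining the first part with Lemma \ref{lemmaricciflat} (any Ricci-flat \K\ submanifold of $\mathcal E\times\Omega$ is flat --- itself requiring Ishi's embedding of $\Omega$ into a Siegel domain, the Gauss equation, and the Alekseevsky--Kimel'fel'd--Spiro theorem); and only then apply \cite[Lemma~6]{Placini2023}, which says an isometry between \emph{irreducible, non-Ricci-flat} \K\ manifolds is holomorphic or anti-holomorphic, to modify the local isometry $M_1\to M_2$ factor by factor into a holomorphic isometry, contradicting the relatives statement. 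Without first eliminating the Ricci-flat factor, no matching of curvature invariants forces the isometry to be holomorphic (on a flat factor it certainly need not be), so the contradiction cannot be reached. That elimination step, and the lemma behind it, is the idea your proposal is missing.
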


This theorem strengthens Loi--Mossa's rigidity result\,\cite[Theorem 1.1]{LMrighom} in two mutually independent directions, thereby refining  earlier contributions by Calabi, Cheng, Chen--Deng--Yuan, Di~Scala--Loi, Huang--Yuan, and Umehara\,\cite{Cal,Cheng2021,CDY,diloi,HY,HYbook,UmearaC}.

First, the notion of \emph{relatives} is replaced by  \emph{weak relatives}, so the earlier setting appears as a particular instance of the present one.  Second, the ``special'' condition formerly imposed on the target is dispensed with: the target needs no longer be either a classical flag manifold or a flag manifold  with a  K\"ahler metric which is  a positive  multiple of a projectively induced homogeneous metric, that is, the pull--back of the Fubini-Study metric via a holomorphic isometric immersion into projective space.  Although that hypothesis already covered homogeneous K\"ahler--Einstein metrics
and  every flag manifold with $b_2=1$ (and hence every compact Hermitian symmetric space  is ``special'' in that sense), it excluded numerous cases such as flag manifolds of exceptional type and their K\"ahler products.  
Theorem \ref{mainteorrel}, by contrast, applies to \emph{all} flag manifolds.

\begin{remark}\rm
There can exist three \K\ manifolds $M_i$ ($i=1,2,3$) such that $M_1$ is not relative to either $M_2$ or $M_3$ individually, yet $M_1$ is relative to the product $M_2\times M_3$; see \cite{LMrighom} for explicit examples.
Consequently, the proof of Theorem~\ref{mainteorrel} cannot be reduced to examining each factor separately. 
\end{remark}

As an immediate and striking consequence of Theorem~\ref{mainteorrel} we obtain a strong extension of \cite[(ii) of Theorem~1.1]{PRIMO} and \cite[Corollary~2]{LMrighom}:
\begin{cor}\label{maincorol2}
Any two among a flat space $\mathcal E$, a flag manifold $\mathcal C$, and a homogeneous bounded domain $\Omega$ are not weak relatives.
\end{cor}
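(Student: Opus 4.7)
The plan is to derive Corollary \ref{maincorol2} directly from Theorem \ref{mainteorrel} by a \emph{trivial-factor padding} reduction: any common K\"ahler submanifold (up to local isometry) shared by two of the three classes can be promoted to a common K\"ahler submanifold of one of the product ambients appearing in Theorem \ref{mainteorrel}, simply by composing one of the two holomorphic isometries with the inclusion $S \hookrightarrow S \times \{p_0\}$ along a point $p_0$ of the missing third factor.

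To set this up, I would argue by contradiction and treat the three pairs in turn. As a representative instance, consider the claim that $\mathcal E$ and $\mathcal C$ are not weak relatives, and suppose on the contrary that there exist locally isometric K\"ahler manifolds $M_1, M_2$ and holomorphic isometries $\varphi_1\colon M_1\to\mathcal E$, $\varphi_2\colon M_2\to\mathcal C$. Choose any homogeneous bounded domain $\Omega$---for instance the unit disk in $\C$ with its Bergman metric---fix a basepoint $p_0\in\Omega$, and define $\tilde\varphi_2\colon M_2\to\mathcal C\times\Omega$ by $\tilde\varphi_2(x)=(\varphi_2(x),p_0)$. Since the slice $\mathcal C\times\{p_0\}$ is a totally geodesic complex submanifold on which the product K\"ahler metric restricts to $g_{\mathcal C}$, the map $\tilde\varphi_2$ is a holomorphic isometry. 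The pair $(\varphi_1,\tilde\varphi_2)$ then exhibits $\mathcal E$ and $\mathcal C\times\Omega$ as weak relatives, contradicting part (i) of Theorem \ref{mainteorrel}.

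The two remaining cases are handled identically. If $\mathcal E$ and $\Omega$ were weak relatives, then padding $\Omega$ with any auxiliary flag manifold $\mathcal C$ would make $\mathcal E$ and $\mathcal C\times\Omega$ weak relatives, again contradicting (i). If $\mathcal C$ and $\Omega$ were weak relatives, then padding $\mathcal C$ with any auxiliary flat K\"ahler factor $\mathcal E$ (say $\C^n$ with the standard flat metric) would make $\Omega$ and $\mathcal E\times\mathcal C$ weak relatives, contradicting (iii). I do not anticipate any genuine obstacle in this reduction: all the geometric content of Corollary \ref{maincorol2} is concentrated in Theorem \ref{mainteorrel}, and the corollary itself is a purely formal consequence, obtained by noting that trivial-factor padding leaves the locally isometric source $M_i$ unchanged and thus stays squarely within the hypotheses of Definition \ref{defrel}(2).
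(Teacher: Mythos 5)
Your proposal is correct and coincides with the paper's (implicit) argument: the corollary is presented there as an immediate consequence of Theorem~\ref{mainteorrel}, and the trivial-factor padding through the totally geodesic holomorphic slice $S\times\{p_0\}$, which leaves the locally isometric sources $M_1,M_2$ untouched, is precisely the formalization required.
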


\vskip 0.3cm 
To orient the reader, we briefly outline how the paper is organized and highlight the main ideas behind each theorem.
Section \ref{prel} collects Lie-theoretic preliminaries, including the Killing form and an adapted root-space decomposition for flag manifolds. Section \ref{proofs} contains the proofs of our main results.
Central for the proof of Theorem \ref{mainteoremb}  is the choice of Cartan subalgebras that are \emph{aligned} by an injective Lie--group homomorphism $G \hookrightarrow \tilde G,$
where \(G\) denotes the compact, connected, \emph{simple} Lie group acting transitively on the source flag manifold \(\mathcal C = G/K\), while \(\tilde G\) is one of the classical simple groups \(\mathrm{SU}(N),\ \mathrm{SO}(N)\  (N\neq 4),\ \mathrm{Sp}(N)\) acting on the ambient flag manifold \(\tilde{\mathcal C} = \tilde G/\tilde K\).
Once this alignment is fixed, Lemma~\ref{Killingfund} supplies the precise rescaling of the Killing form that makes the pulled--back and ambient K\"ahler forms coincide, and a root--by--root check then confirms that the resulting map is holomorphic. 

The proof of Theorem~\ref{mainteorrel} relies on two key ingredients.  
First, Theorem~\ref{mainteoremb} allows us to replace a general flag manifold 
\(\mathcal{C}\) with its holomorphic and isometric image inside a classical flag manifold 
\(\widetilde{\mathcal{C}}\). The rigidity theorem of Loi--Mossa~\cite[Thm.~1.1]{LMrighom} 
then ensures that none of the pairs
\[
(\mathcal{E},\ \widetilde{\mathcal{C}} \times \Omega),\qquad
(\widetilde{\mathcal{C}},\ \mathcal{E} \times \Omega),\qquad
(\Omega,\ \mathcal{E} \times \widetilde{\mathcal{C}})
\]
can share a positive-dimensional Kähler submanifold, hence, they are not relatives.
Secondly, assuming that one of the original pairs were only weak relatives, 
the method developed in~\cite{Placini2023}, together with Lemma~\ref{lemmaricciflat}, which 
states that any Ricci-flat \K\ submanifold of \(\mathcal{E} \times \Omega\) is flat, leads 
to the conclusion that the pair must in fact be relatives, yielding a contradiction and 
thus completing the proof.


\section{Preliminaries}\label{prel}

\subsection{The Killing form}
Let $\mathfrak g$ be a finite-dimensional Lie algebra over a field
$F$ of characteristic \(0\).
The \emph{Killing form} of $\mathfrak g$ is the bilinear map
\[
  \Kill_{\mathfrak g}\colon
  \mathfrak g\times\mathfrak g\longrightarrow F,
  \qquad
  \Kill_{\mathfrak g}(X,Y)\;:=\;
  \operatorname{tr}\!\bigl(\operatorname{ad}_X\circ\operatorname{ad}_Y\bigr),
\]
where
\(
  \operatorname{ad}_X(Y):=[X,Y]
\)
is the adjoint representation.
It is routine to verify that $\Kill_{\mathfrak g}$ is symmetric,
bilinear, and associative; i.e.  
\[
  \Kill_{\mathfrak g}\bigl([X,Y],Z\bigr)
  \;=\;
  \Kill_{\mathfrak g}\bigl(X,[Y,Z]\bigr),
  \qquad\forall\,X,Y,Z\in\mathfrak g.
\]

Recall the celebrated Cartan's criterium asserting that the Killing form of $\mathfrak g$ is non-degenerate
if and only if $\mathfrak g$ is \emph{semisimple}.

In this paper we will be concerned primarily with the case where $\mathfrak g$
is \emph{simple}, i.e.\ non-abelian and possessing no ideals other than
$\,\{0\}$ and $\mathfrak g$ itself.
Notice that a semisimple Lie algebra decomposes as a finite direct sum of simple ideals.
Recall also  that  a real simple Lie algebra $\mathfrak{g}$ is said to be  {\em compact} 
if  its  Killing form $\Kill_{\mathfrak g}$ is negative-definite (equivalently, $\mathfrak g$ is the Lie algebra
 of a compact connected Lie group). 
 We shall use the following lemma in the proof of Theorem \ref{mainteoremb}.
 
\begin{lem}\label{Killingfund}
Let $ \mathfrak{g}$ be a \emph{compact simple} Lie algebra over $\R$ and let  
\[
   b: \mathfrak{g}\times \mathfrak{g}\longrightarrow\R
\]
be an \emph{associative} symmetric bilinear form, i.e.  
\begin{equation}\label{eq:associative}
  b([X,Y],Z)=b(X,[Y,Z])
  \qquad\forall\,X,Y,Z\in \mathfrak{g}.
\end{equation}
Then there exists a constant $c\in\R$ such that
\begin{equation}\label{eq:bKill}
  b(X,Y)\;=\;c\,\Kill_{\mathfrak{g}}(X,Y),
  \qquad\forall\,X,Y\in \mathfrak{g}.
\end{equation}
\end{lem}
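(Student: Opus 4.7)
\medskip

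The plan is to show that any associative symmetric bilinear form $b$ on $\mathfrak g$ differs from $\Kill_{\mathfrak g}$ by a linear operator $T$ that must be a scalar, by combining the non-degeneracy of the Killing form (which comes from simplicity via Cartan's criterion) with Schur-type irreducibility of the adjoint representation.

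First, since $\mathfrak g$ is simple it is semisimple, so by Cartan's criterion $\Kill_{\mathfrak g}$ is non-degenerate. Hence there exists a unique $\R$-linear map $T\colon\mathfrak g\to\mathfrak g$ such that
\[
   b(X,Y)\;=\;\Kill_{\mathfrak g}(TX,Y),\qquad\forall\,X,Y\in\mathfrak g.
\]
I would then exploit associativity to show that $T$ intertwines the adjoint action. A short manipulation of \eqref{eq:associative} (together with the skew-symmetry of $[\cdot,\cdot]$) gives $b(\ad_Z X,Y)=-b(X,\ad_Z Y)$, and the analogous identity holds for $\Kill_{\mathfrak g}$. Substituting the defining relation for $T$ into both sides yields
\[
   \Kill_{\mathfrak g}\bigl(T\ad_Z X,\,Y\bigr)\;=\;\Kill_{\mathfrak g}\bigl(\ad_Z TX,\,Y\bigr)\qquad\forall\,Y\in\mathfrak g,
\]
and non-degeneracy of $\Kill_{\mathfrak g}$ forces $T\,\ad_Z=\ad_Z\,T$ for every $Z\in\mathfrak g$.

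Next I would deduce that $T$ is a real scalar, using simplicity and compactness. Simplicity of $\mathfrak g$ is equivalent to irreducibility of the adjoint representation. Moreover, because $b$ and $\Kill_{\mathfrak g}$ are both symmetric, $T$ is self-adjoint with respect to $\Kill_{\mathfrak g}$; since $\mathfrak g$ is compact, $-\Kill_{\mathfrak g}$ is a positive-definite inner product, so the spectral theorem gives an $\R$-diagonalization of $T$ into orthogonal eigenspaces with real eigenvalues. Each such eigenspace is $\ad$-invariant by the commutation $[T,\ad_Z]=0$, and irreducibility leaves only the options $\{0\}$ and $\mathfrak g$. Hence $T$ has a single eigenvalue $c\in\R$, that is $T=c\,\Id_{\mathfrak g}$, which is precisely \eqref{eq:bKill}. (An equally clean alternative would be to complexify: $\mathfrak g_{\C}$ is a complex simple Lie algebra, the $\C$-linear extension of $T$ is still $\ad$-equivariant, hence a scalar by Schur's lemma, and the scalar is real because $T$ preserves the real form $\mathfrak g$.)

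There is no serious obstacle here; the only points requiring a moment of care are the sign bookkeeping when rewriting \eqref{eq:associative} as the skew-symmetry of $\ad_Z$ with respect to $b$, and the invocation of either the spectral theorem or Schur's lemma to pass from ``$T$ commutes with all $\ad_Z$'' to ``$T$ is a scalar''. Compactness enters only to guarantee self-adjoint diagonalizability (or, in the alternative route, to ensure that $\mathfrak g_{\C}$ is complex simple), both of which are standard.
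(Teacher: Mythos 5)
Your proof is correct, and it takes a genuinely different route from the paper's. The paper complexifies: it extends $b$ to a $\C$-bilinear associative form $b_\C$ on $\mathfrak g^\C$, observes that $\mathfrak g^\C$ is simple (using the compactness hypothesis via uniqueness of compact real forms), invokes Schur's lemma for the irreducible adjoint representation to get $b_\C=c\,\Kill_{\mathfrak g^\C}$ with $c\in\C$, and then restricts to $\mathfrak g$, where real-valuedness of both forms forces $c\in\R$. You instead stay entirely over $\R$: non-degeneracy of $\Kill_{\mathfrak g}$ produces the intertwining operator $T$, and compactness enters through positive-definiteness of $-\Kill_{\mathfrak g}$, which lets the spectral theorem diagonalize the self-adjoint $T$; irreducibility of the adjoint representation (equivalently, simplicity) then collapses the spectrum to a single real eigenvalue. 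Your real approach sidesteps a genuine subtlety of the purely algebraic route: over $\R$, Schur's lemma alone only says that the commutant of an irreducible representation is a division algebra (possibly $\C$ or $\H$), so one cannot conclude $T=c\,\Id$ without an extra input --- you supply that input via self-adjointness and the spectral theorem, while the paper supplies it by passing to $\C$ where the commutant is forced to be $\C\cdot\Id$. Both arguments use compactness in an essential but different way, and your parenthetical alternative at the end is essentially the paper's proof. All the individual steps you flag (the sign bookkeeping showing $\ad_Z$ is skew-symmetric for $b$, the $\ad$-invariance of eigenspaces, and the identification of $\ad$-invariant subspaces with ideals) check out.
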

\begin{proof}
Because $ \mathfrak{g}$ is compact and simple, its complexification
$ \mathfrak{g}^\C$ is again simple, as it can easily be seen by using the uniqueness up to conjugacy of compact real forms.
Extend $b$ complex-bilinearly to
$b_\C: \mathfrak{g}^\C\times \mathfrak{g}^\C\longrightarrow\C$; 
equation \eqref{eq:associative} implies that $b_\C$ is still associative.
Since $\mathfrak{g}^\C$ is simple, Schur's lemma for the (irreducible) adjoint
representation tells us that every associative symmetric bilinear form is a scalar
multiple of the Killing form; hence
\begin{equation}\label{eq:bCKill}
   b_\C(X,Y)=c\,\Kill_{\mathfrak{g}^\C}(X,Y)
   \qquad\text{for some }c\in\C .
\end{equation}

For $X,Y\in\mathfrak{g}$ we have $\Kill_{\mathfrak{g}^\C}(X,Y)=\Kill_\mathfrak{g}(X,Y)$
(\cite[Lemma~6.1, p.~180]{Helgason}); thus, restricting
\eqref{eq:bCKill} gives
\[
   b(X,Y)=c\,\Kill_{\mathfrak{g}}(X,Y), \qquad X,Y\in\mathfrak{g} .
\]
Both $b$ and $\Kill_{\mathfrak{g}}$ are real-valued on $\mathfrak{g}$, forcing $c\in\R$.
\end{proof}

\subsection{Irreducible flag manifold}
Let \(\mathcal C\) be an \emph{irreducible} flag manifold and let \(\omega\) denote the \K\  form of a homogeneous \K\ metric \(g\) on \(M\), i.e. $g(X,Y)\;=\;\omega\!\bigl(X,\,JY\bigr)$, where $J$
is the integrable almost complex structure.

It is classical (see, for example, \cite{Arva1996flag,BFR1986}) that
$\mathcal C \;=\; G/K,$
where \(G\) is a compact \emph{simple} Lie group, i.e. its Lie algebra is a compact simple Lie algebra,  and
\begin{equation}\label{defK}
   K \;=\; C_G(Z_0) \;=\; \bigl\{\gamma\in G \mid \Ad(\gamma)(Z_0)=Z_0\bigr\}
\end{equation}
is the centraliser of a non-zero element \(Z_0\in\mathfrak{g}\).
The element \(Z_0\) completely determines both the complex and the \K\ structures on \(\mathcal C\) as follows.

Choose the reductive decomposition, orthogonal with respect to the Killing form~\(\Kill_{\mathfrak{g}}\),
\begin{equation}\label{reductive}
   \mathfrak{g} \;=\; \mathfrak{k} \;\oplus\; \mathfrak{m},
\end{equation}
where $\mathfrak{k}$ is the Lie algebra of $K$.
Identifying \(T_o(G/K)\) with \(\mathfrak{m}\) at the base point \(o=[e]_K\), one has
\begin{equation}\label{omega0}
   \omega_o(X,Y)
   \;=\;
   \Kill_{\mathfrak{g}}\!\bigl([Z_0,X],Y\bigr),
   \qquad X,Y\in\mathfrak{m}.
\end{equation}

Conversely, for any compact simple Lie group \(G\) and any \(Z_0\in\mathfrak{g}\), the space \(G/K\) defined by~\eqref{defK}, endowed with the \(2\)-form~\eqref{omega0}, is an irreducible flag manifold carrying precisely the homogeneous \K\ structure determined by \(Z_0\).

Let \(\mathfrak{h}\subset\mathfrak{g}^{\C}\) be a Cartan subalgebra, i.e.\ a maximal abelian subalgebra consisting of elements that act diagonalisably in the adjoint representation.  Since the operators \(\ad_H\;(H\in\mathfrak{h})\) are simultaneously diagonalisable, there exist common eigenvectors \(X\in\mathfrak{g}^{\C}\) such that
\[
   [H,X]=\alpha(H)\,X.
\]
The non-zero linear functionals \(\alpha:\mathfrak{h}\to\C\) form the finite root system of \(\mathfrak{g}^{\C}\); each \emph{root space}
\[
   \mathfrak{g}^{\alpha}
   \;=\;
   \bigl\{X\in\mathfrak{g}^{\C}\mid[H,X]=\alpha(H)X\;\ \forall\,H\in\mathfrak{h}\bigr\}
\]
is one-dimensional and spanned by \emph{root vectors}.

Henceforth, we denote by \(E_\alpha\) a fixed generator of \(\mathfrak{g}^\alpha\); it will be written out explicitly only when necessary.

Then we can write
\begin{equation}\label{ghgalpha}
\mathfrak{g}^{\C}=\mathfrak{h}\oplus \sum_{\alpha} \mathfrak{g}^{\alpha}
\end{equation}

Every Cartan subalgebra arises by complexifying a maximal torus of \(\mathfrak{g}\).
For our purpose it will be crucial to consider Cartan subalgebras arising from maximal tori of $\mathfrak{k}$
and containing  the centre \(\mathfrak{t}=C(\mathfrak{k})\) of \(\mathfrak{k}\). This is possible thanks to the following (well-known) lemma included for reader convenience:

\begin{lem}\label{lemmaxtori}
Let \(\mathfrak t_{\max}\subset \mathfrak k\) be a maximal torus of the compact subalgebra \(\mathfrak k\). Then
\begin{enumerate}[(i)]
  \item  \(\mathfrak t:=C(\mathfrak k)\) is contained in \(\mathfrak t_{\max}\);
  \item \(\mathfrak t_{\max}\) is a maximal torus of the Lie algebra \(\mathfrak g\).
\end{enumerate}
\end{lem}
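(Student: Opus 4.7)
The plan is to reduce both parts to the standard fact that, in a \emph{compact} Lie algebra, a maximal torus is the same object as a maximal abelian subalgebra: every element of a compact Lie algebra acts diagonalisably in the adjoint representation (over the complexification), so every abelian subalgebra integrates to a compact torus, and maximality in one sense matches maximality in the other. Once this is invoked, both (i) and (ii) become purely Lie-algebraic statements about maximal abelian subalgebras, and the argument avoids any reference to the group-theoretic side.

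For (i), the idea is that $\mathfrak t=C(\mathfrak k)$ is by definition abelian and each of its elements commutes with every element of $\mathfrak k$, hence in particular with every element of $\mathfrak t_{\max}\subset\mathfrak k$. Therefore $\mathfrak t+\mathfrak t_{\max}$ is an abelian subalgebra of $\mathfrak k$ containing $\mathfrak t_{\max}$, and the maximality of $\mathfrak t_{\max}$ as an abelian subalgebra of $\mathfrak k$ forces $\mathfrak t+\mathfrak t_{\max}=\mathfrak t_{\max}$, i.e.\ $\mathfrak t\subset\mathfrak t_{\max}$.

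For (ii), the key observation is that $Z_0\in\mathfrak t_{\max}$: indeed, from $K=C_G(Z_0)$ one has $\mathfrak k=\{X\in\mathfrak g:[X,Z_0]=0\}$, so $Z_0$ commutes with all of $\mathfrak k$, hence $Z_0\in C(\mathfrak k)=\mathfrak t$, and (i) then places $Z_0$ inside $\mathfrak t_{\max}$. With this in hand, suppose $X\in\mathfrak g$ commutes with every element of $\mathfrak t_{\max}$; then in particular $[X,Z_0]=0$, so $X\in\mathfrak k$. But then $X$ lies in $\mathfrak k$ and commutes with all of $\mathfrak t_{\max}$, so the maximality of $\mathfrak t_{\max}$ as an abelian subalgebra of $\mathfrak k$ yields $X\in\mathfrak t_{\max}$. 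Hence $\mathfrak t_{\max}$ is a maximal abelian subalgebra of $\mathfrak g$, i.e.\ a maximal torus of $\mathfrak g$.

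The only genuinely non-trivial step, and the one I would highlight as the crux of the argument, is the inclusion $Z_0\in\mathfrak t_{\max}$: it is precisely the defining property $\mathfrak k=C_{\mathfrak g}(Z_0)$ that promotes a maximal torus of $\mathfrak k$ to a maximal torus of $\mathfrak g$, and without this hypothesis (ii) would generally fail.
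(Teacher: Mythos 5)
Your proof is correct, and part (i) takes a genuinely different (and more elementary) route than the paper's. For (i), the paper fixes one maximal torus $\mathfrak t_{\max}'$ containing $\mathfrak t=C(\mathfrak k)$ and then transports the inclusion to an arbitrary $\mathfrak t_{\max}$ via the conjugacy theorem for maximal tori, using that $\mathfrak t$ is $\operatorname{Ad}(K)$-stable; you instead observe directly that $\mathfrak t+\mathfrak t_{\max}$ is an abelian subalgebra of $\mathfrak k$ (since $\mathfrak t$ centralises all of $\mathfrak k$, hence $\mathfrak t_{\max}$), so maximality forces $\mathfrak t\subset\mathfrak t_{\max}$. Your argument avoids the conjugacy theorem entirely and needs only the identification, valid in a compact Lie algebra, of maximal tori with maximal abelian subalgebras --- a cleaner reduction, at the cost of having to invoke that identification explicitly. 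For (ii) the two arguments share the same crux, namely that $Z_0\in C(\mathfrak k)\subset\mathfrak t_{\max}$ and $C_{\mathfrak g}(Z_0)=\mathfrak k$: the paper phrases it by taking a maximal torus $\mathfrak s_{\max}\supseteq\mathfrak t_{\max}$ of $\mathfrak g$ and showing $\mathfrak s_{\max}\subset\mathfrak k$, while you show $\mathfrak t_{\max}$ is self-centralising in $\mathfrak g$; these are equivalent and equally rigorous. Your correct identification of $Z_0\in\mathfrak t_{\max}$ as the pivotal step matches the paper's reasoning exactly.
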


\begin{proof}
\textbf{(i)}\; Fix a maximal torus \(\mathfrak t_{\max}'\subset\mathfrak k\)
such that  $\mathfrak t\subseteq \mathfrak t_{\max}'$. For every other maximal torus \(\mathfrak t_{\max}\subset\mathfrak k\) there exists \(\gamma\in K\) such that 
\[
  \mathfrak t_{\max}=\operatorname{Ad}(\gamma)\,\mathfrak t_{\max}'.
\]
Because \(\mathfrak t\) is stable under the adjoint action of \(K\), we obtain
\[
  \mathfrak t=\operatorname{Ad}(\gamma)\,\mathfrak t\subseteq\operatorname{Ad}(\gamma)\,\mathfrak t_{\max}'=\mathfrak t_{\max},
\]
which proves part~(i).

\medskip
\noindent
\textbf{(ii)}\; Let \(\mathfrak s_{\max}\subset\mathfrak g\) be a maximal torus containing \(\mathfrak t_{\max}\). Choose any element \(Z\in\mathfrak s_{\max}\). From part~(i) we know \(\mathfrak t\subseteq\mathfrak t_{\max}\); in particular, \(Z_0\in\mathfrak t\subseteq\mathfrak s_{\max}\). Hence
\[
  [Z_0,Z]=0 \quad\Longrightarrow\quad Z\in C_{\mathfrak g}(Z_0)=\mathfrak k.
\]
Since \(\mathfrak t_{\max}\) is maximal in \(\mathfrak k\), we must have \(Z\in\mathfrak t_{\max}\). Consequently \(\mathfrak s_{\max}=\mathfrak t_{\max}\), and \(\mathfrak t_{\max}\) is a maximal torus of \(\mathfrak g\).
\end{proof}

To describe the complex structure of a flag manifold $(\mathcal C=G/K, g)$ we proceed as follows.
Let $Z_0$ be as in \eqref{defK}.
Declare a root \(\alpha\) to be \emph{black} if it does not vanish identically on \(\mathfrak{t}\) (easily seen to be equivalent to \(\alpha(Z_0)\neq0\)), and \emph{white} otherwise (\(\alpha(Z_0)=0\)).
With the complexified decomposition
\begin{equation}\label{gC}
   \mathfrak{g}^{\C}= \mathfrak{k}^{\C}\oplus\mathfrak{m}^{\C},
\end{equation}
white root vectors lie in \(\mathfrak{k}^{\C}\) while black root vectors span $ \mathfrak{m}^{\C}$, i.e.
\begin{equation}\label{mcblack}
   \mathfrak{m}^{\C} \;=\; 
\sum_{\substack{\alpha \ \text{black}}} \mathfrak{g}^{\alpha}.
\end{equation}
It follows by \eqref{ghgalpha} that


\begin{equation}\label{tcblack}
   \mathfrak{g}^{\C} \;=\; 
\mathfrak{h}\oplus\sum_{\substack{\alpha \ \text{white}}} \mathfrak{g}^{\alpha}\oplus 
\sum_{\substack{\alpha \ \text{black}}} \mathfrak{g}^{\alpha}.
\end{equation}

Finally, for each black root $\alpha$ the almost complex structure \(J\) acts by
\begin{equation}\label{defJ}
J\!\bigl(E_\alpha\bigr)
  \;=\;
  i\,\operatorname{sgn}\!\bigl[\alpha(i Z_0)\bigr]\,E_\alpha,
\end{equation}
where \(E_{\alpha}\in\mathfrak{g}^{\alpha}\) is a root vector.
Notice that  
$\alpha(i Z_0)$ is indeed  a real number, see e.g.  \cite[p. 254]{Knapp2013lie}.

\section{Proofs of the main results}\label{proofs}
We begin by proving  Theorem~\ref{mainteoremb}; the argument is organised into the following steps.

\begin{proof}[Proof of Theorem \ref{mainteoremb}]

\mbox {}
\paragraph{\bf Step 1. (the case when $\mathcal C$ is irreducible)}
Assume $\mathcal C$ is irreducible, i.e. \(\mathcal C=G/K\) with \(G\) compact, connected and \emph{simple}.  
Thus,  for $N$ sufficiently large
there exists a injective homomorphism
\begin{equation}\label{homomorphism}
\phi :G\rightarrow \tilde G,
\end{equation}
where $\tilde G$ is a classical  irreducible simple group $SU(N)$, $Sp(N)$ or $SO(N)\ (N\neq 4)$. 
Indeed, for every compact simple Lie group $G$ one can find an injective homomorphism into $SU(N)$ for $N$ large  
and $\label{iota}
\iota: SU(N) \rightarrow Sp(2N) \cap SO(2N)$
defined by 
$$\iota(A + i B) = \left( \begin{array}{cc}
A & B \\
-B & A
\end{array} \right)$$
is an injective  Lie group homomorphism.

Define
\[
\tilde K:=\bigl\{\tilde\gamma\in\tilde G \,\big|\, 
          \operatorname{Ad}_{\tilde\gamma}\!\bigl(d\phi(Z_0)\bigr)=d\phi(Z_0)\bigr\}.
\]
Then \(\tilde {\mathcal C}:=\tilde G/\tilde K\) is an irreducible flag manifold of classical type and
 it easily  seen that    $\phi(K) \subseteq \tilde K$.
 
Then
\begin{equation}\label{PHImap}
\Phi: \mathcal C \rightarrow \mathcal {\tilde C}, \ \ \Phi[\gamma]_K = [\phi(\gamma)]_{\tilde K}
\end{equation}
is a well-defined injective smooth equivariant map, 
i.e. $\Phi \circ \lambda = \phi(\lambda) \circ \Phi$, for every $\lambda \in G$. 

\smallskip

\paragraph{\bf Step 2 (aligned Cartan subalgebras)}

Take $d\phi$ as the differential  at the identity $e\in G$ of  the homomorphism $\phi$ given by  
\eqref{homomorphism}.
The  Lie algebra homomorphism $d \phi: \mathfrak{g} \rightarrow\tilde{\mathfrak{g}}$, where $\mathfrak{g}$  and 
$\tilde{\mathfrak{g}}$ are the Lie algebras of $G$ and $\tilde G$ respectively, extends 
to a Lie algebra homomorphism  $d \phi: \mathfrak{g}^{\C} \rightarrow \mathfrak{\tilde g}^{\C}$ (which with a little abuse of notation we denote by the same notation).

\medskip 
We claim that  there  exist Cartan subalgebras
$\mathfrak h \subset \mathfrak g,\
  \tilde{\mathfrak h} \subset \tilde{\mathfrak g},$
such that
\[
  \mathfrak t := C(\mathfrak k) \subset \mathfrak h, \qquad
  \tilde{\mathfrak t} := C(\tilde{\mathfrak k}) \subset \tilde{\mathfrak h}, \qquad
  d\phi(\mathfrak h) \subset \tilde{\mathfrak h}.
\]

To prove the claim let $\mathfrak t_{\max} \subset \mathfrak k$ be a maximal torus. By Lemma~\ref{lemmaxtori} it contains the centre $\mathfrak t = C(\mathfrak k)$ and is maximal in $\mathfrak g$. Consequently, its complexification 
$\mathfrak h := \mathfrak t_{\max}^{\mathbb C}$
is a Cartan subalgebra of $\mathfrak g$ with $\mathfrak t \subset \mathfrak h$.

Because $d\phi(\mathfrak k) \subset \tilde{\mathfrak k}$ and $\mathfrak t_{\max} \subset \mathfrak k$, the image $d\phi(\mathfrak t_{\max})$ is an abelian subalgebra of $\tilde{\mathfrak k}$, that is, a torus. Choose a maximal torus $\tilde{\mathfrak t}_{\max} \subset \tilde{\mathfrak k}$ that contains $d\phi(\mathfrak t_{\max})$. Again by Lemma~\ref{lemmaxtori}, we have $\tilde{\mathfrak t} \subset \tilde{\mathfrak t}_{\max}$ and $\tilde{\mathfrak t}_{\max}$ is maximal in $\tilde{\mathfrak g}$. Hence its complexification
$\tilde{\mathfrak h} := \tilde{\mathfrak t}_{\max}^{\mathbb C}$
is a Cartan subalgebra of $\tilde{\mathfrak g}$ with $\tilde{\mathfrak t} \subset \tilde{\mathfrak h}$.

Finally,
\[
  d\phi(\mathfrak h)
    = d\phi\bigl(\mathfrak t_{\max}^{\mathbb C}\bigr)
    = d\phi(\mathfrak t_{\max})^{\mathbb C}
    \subset \tilde{\mathfrak t}_{\max}^{\mathbb C}
    = \tilde{\mathfrak h},
\]
so all the desired inclusions hold, and the claim is proved.

\begin{remark}\rm\label{rmknew}
Since any two maximal tori in $\tilde{\mathfrak{g}}$ are conjugate, by composing $\phi$ with the conjugation by a suitable $\gamma \in \tilde G$ we can always assume that the Cartan subalgebra $\tilde{\mathfrak{h}}$ is the standard Cartan subalgebra of the classical group $\tilde G$ consisting of the diagonal matrices in $\tilde{\mathfrak{g}}^{\C}$ (see, e.g.  \cite[Sect. 2. Ch. III]{Helgason}).
\end{remark}

\smallskip 

\paragraph{\bf Step 3 (correspondence of root spaces)}
Let  $\mathfrak{h}$ (resp. $\tilde{\mathfrak{h}}$) be a Cartan subalgebra of $\mathfrak{g}^{\C}$ (resp. $\tilde{\mathfrak{g}}^{\C}$) such that  $d \phi(\mathfrak{h})\subseteq \tilde{\mathfrak{h}}$ as in Step 2.

\begin{defin}[{\(\phi\)-related roots}]
Let \(\alpha\) be a root of \(\mathfrak{g}^{\C}\).
A root \(\tilde\alpha\) of \(\tilde{\mathfrak{g}}^{\C}\) is said to be
\emph{\(\phi\)-related to \(\alpha\)} if
\[
\tilde\alpha\!\bigl(d\phi(H)\bigr)=\alpha(H)
\quad\text{for every }H\in\mathfrak{h}.
\]
\end{defin}

\begin{lem}
 Fix a root \(\alpha\) of \(\mathfrak{g}^{\C}\) with respect to \(\mathfrak{h}\) and a root
vector \(E_\alpha\).
Then, there exist roots \(\tilde\alpha_1,\dots,\tilde\alpha_k\) of \(\tilde{\mathfrak{g}}^{\C}\),  $\phi$-related to $\alpha$, and non-zero complex numbers
\(c_1,\dots,c_k\) such that
\begin{equation}\label{linearcomb}
d\phi(E_\alpha)=\sum_{j=1}^{k} c_j E_{\tilde\alpha_j},
\end{equation}
Consequently, 
\(\alpha\) is black (resp.\ white) for \(G/K\) iff each \(\tilde\alpha_j\) is black (resp.\ white) for
\(\tilde G/\tilde K\) and 
\begin{equation}\label{inclm}
d\phi(\mathfrak{m}) \subset \tilde{\mathfrak{m}}.
\end{equation}
where
\(\mathfrak{g}=\mathfrak{k}\oplus\mathfrak{m}\) and
\(\tilde{\mathfrak{g}}=\tilde{\mathfrak{k}}\oplus\tilde{\mathfrak{m}}\)
are the Killing-orthogonal splittings.
\end{lem}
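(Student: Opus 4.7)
The plan is to exploit the intertwining relation $d\phi\circ\ad_H = \ad_{d\phi(H)}\circ d\phi$ valid for every $H\in\mathfrak{h}$, which transforms the root equation $[H,E_\alpha]=\alpha(H)E_\alpha$ into a spectral constraint on $d\phi(E_\alpha)$ inside $\tilde{\mathfrak{g}}^{\C}$. The $\phi$-related condition is precisely the statement that certain eigenvalues of $\ad_{\tilde{\mathfrak{h}}}$ match $\alpha\circ(d\phi|_{\mathfrak h})^{-1}$, so a direct comparison of $\tilde{\mathfrak{h}}$-eigenspace components should do the job.

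First I would expand $d\phi(E_\alpha)$ along the root-space decomposition of $\tilde{\mathfrak{g}}^{\C}$ as
\[
d\phi(E_\alpha)\;=\;H_0 \;+\; \sum_{\tilde\beta} c_{\tilde\beta}\,E_{\tilde\beta},\qquad H_0\in\tilde{\mathfrak{h}},
\]
with only finitely many non-zero coefficients $c_{\tilde\beta}$. Applying $\ad_{d\phi(H)}$ to both sides, and using that $d\phi(H)\in\tilde{\mathfrak{h}}$ annihilates $H_0$ while acting on each $E_{\tilde\beta}$ by multiplication by $\tilde\beta(d\phi(H))$, and comparing with $\ad_{d\phi(H)}(d\phi(E_\alpha)) = \alpha(H)\,d\phi(E_\alpha)$, I obtain, for every $H\in\mathfrak{h}$, the identities
\[
\alpha(H)\,H_0 = 0, \qquad \bigl(\tilde\beta(d\phi(H))-\alpha(H)\bigr)\,c_{\tilde\beta} = 0\quad\text{for every }\tilde\beta.
\]
Since $\alpha\neq 0$ as a linear functional, the first identity forces $H_0=0$; the second forces each $\tilde\beta$ with $c_{\tilde\beta}\neq 0$ to satisfy $\tilde\beta\circ d\phi|_{\mathfrak{h}} = \alpha$, i.e.\ to be $\phi$-related to $\alpha$. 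Relabelling these finitely many $\tilde\beta$ as $\tilde\alpha_1,\dots,\tilde\alpha_k$ and retaining the non-zero constants as $c_1,\dots,c_k$ yields \eqref{linearcomb}.

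For the colour statement I would specialise $\phi$-relatedness at $H=Z_0$, which lies in $\mathfrak{t}\subset\mathfrak{h}$ by the construction of Step~2: each $\tilde\alpha_j$ then satisfies $\tilde\alpha_j(d\phi(Z_0)) = \alpha(Z_0)$. Since $\tilde K$ is by construction the centraliser of $d\phi(Z_0)$, the colour of $\tilde\alpha_j$ is detected by whether $\tilde\alpha_j$ vanishes on $d\phi(Z_0)$, and the two colours correspond. The inclusion \eqref{inclm} then follows by writing $\mathfrak{m}^{\C}$ as the sum of its black root spaces via \eqref{mcblack}: $d\phi$ sends each black $E_\alpha$ into a combination of black $E_{\tilde\alpha_j}$, hence into $\tilde{\mathfrak{m}}^{\C}$; intersecting with the compact real form $\tilde{\mathfrak{g}}$ gives $d\phi(\mathfrak{m})\subset\tilde{\mathfrak{m}}$.

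The arithmetic here is essentially routine. The only genuinely delicate point will be ensuring that the identity $\tilde\beta\circ d\phi = \alpha$ is read as an equality of linear functionals on the full Cartan $\mathfrak{h}$, not merely as a numerical equality at one element; this is automatic because the adjoint eigenvalue relation holds for every $H\in\mathfrak{h}$. I do not anticipate any substantial obstacle beyond this bookkeeping.
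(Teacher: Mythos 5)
Your proof is correct and follows essentially the same route as the paper's: the same root-space decomposition $d\phi(E_\alpha)=H_0+\sum_{\tilde\beta}c_{\tilde\beta}E_{\tilde\beta}$, the same comparison of $\ad_{d\phi(H)}$-eigencomponents forcing $H_0=0$ and the $\phi$-relatedness of the surviving roots, and the same specialisation at $H=Z_0$ for the colour statement and the inclusion $d\phi(\mathfrak m)\subset\tilde{\mathfrak m}$. The only cosmetic difference is that the paper separately argues that not all coefficients $c_j$ vanish (so that $k\geq 1$), a point which in your version is implicit in the injectivity of $d\phi$.
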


\begin{proof}
Because \([H,E_\alpha]=\alpha(H)E_\alpha\) for all \(H\in\mathfrak{h}\), applying \(d\phi\) gives
\begin{equation}\label{phiHalpha}
[d\phi(H),d\phi(E_\alpha)] = \alpha(H)\,d\phi(E_\alpha).
\end{equation}

 Decompose
\begin{equation}\label{deocmposdfiE}
d\phi(E_\alpha)=\tilde H_0 + \sum_{j=1}^{k} c_j E_{\tilde\alpha_j},
\end{equation}
with \(\tilde H_0\in\tilde{\mathfrak{h}}\) and
\(E_{\tilde\alpha_j}\in\tilde{\mathfrak{g}}^{\tilde\alpha_j}\) linearly independent.
We claim that not all $c_j$ vanish.
Indeed if $d\phi (E_\alpha)=\tilde H_0$ then,  by using   $d\phi(H)\in \tilde{\mathfrak{h}}$,  $[d \phi(H), \tilde H_0] = 0$ (since $\tilde{\mathfrak{h}}$ is abelian) one gets
$$[d\phi (E_\alpha), d\phi (H)]=[\tilde H_0, d\phi (H)]=0$$
for all $H\in {\mathfrak{h}}$, which implies 
$$d\phi([E_\alpha, H])=\alpha (H)d\phi (E_\alpha)=0,$$
namely $\alpha (H)=0$ for all $H$ in contrast with the definition of roots.

Applying \(\operatorname{ad}_{d\phi(H)}\) to \eqref{deocmposdfiE} gives
\[
\alpha(H)\,d\phi(E_\alpha)=
\sum_{j=1}^{k} c_j\,\tilde\alpha_j\bigl(d\phi(H)\bigr) E_{\tilde\alpha_j}.
\]
Subtracting \eqref{deocmposdfiE} multiplied by \(\alpha(H)\) yields
\[
\alpha (H)\tilde H_0+\sum_{j=1}^{k} c_j
  \bigl[ \alpha(H)-\tilde\alpha_j\bigl(d\phi(H)\bigr) \bigr] E_{\tilde\alpha_j}=0
  \quad(\forall H\in\mathfrak{h}). 
\]
Independence of the \(E_{\tilde\alpha_j}\) forces $\tilde H_0$  to vanish and 
\begin{equation}\label{eigenvalues}
\tilde\alpha_j\!\bigl(d\phi(H)\bigr)=\alpha(H)\quad\text{for every }H\in\mathfrak{h},
\end{equation}
proving that $\alpha_j$'s are $\phi$-related to $\alpha$.
Equation \eqref{deocmposdfiE}  (with \(\tilde H_0=0\)) gives the linear combination \eqref{linearcomb}.
Setting \(H=Z_0\) in \eqref{eigenvalues} implies
that  the colours of  $\alpha_j$ and $\tilde\alpha_j$ are  preserved.
Finally, one gets
\[
\mathrm{d}\phi\bigl(\mathfrak{m}^{\mathbb{C}}\bigr)
   \;=\;
   \sum_{\alpha\,\text{ black}}
   \mathrm{d}\phi\bigl(\mathfrak{g}^{\alpha}\bigr)
   \;\subseteq\;
   \sum_{\substack{\tilde{\alpha}\\\text{$\phi$-related to }\alpha}}
   \tilde{\mathfrak{g}}^{\tilde{\alpha}}
   \;\subseteq\;
   \sum_{\tilde{\alpha}\,\text{ black}}
   \tilde{\mathfrak{g}}^{\tilde{\alpha}}
   \;=\;
   \tilde{\mathfrak{m}}^{\mathbb{C}}.
\]
and \eqref{inclm} follows.
\end{proof}

\smallskip

\paragraph{\bf Step 4 (calibrating the \K\ forms)}
Define the pulled-back Killing form
\[
\phi^{*}\Kill_{\tilde{\mathfrak{g}}}(X,Y):=
  \Kill_{\tilde{\mathfrak{g}}}\!\bigl(d\phi(X),d\phi(Y)\bigr),
  \qquad X,Y\in\mathfrak{g}.
\]

Now, $\phi^*(\Kill_{\tilde{\mathfrak{g}}})$ is easily seen to be an associative bilinear form on $\mathfrak{g}$.
Then,  since  $\mathfrak{g}$ is a compact simple algebra, we can conclude by Lemma \ref{Killingfund} and by  the fact that the Killing form are non degenerate and both negative definite that
there exist a   {\em positive} real constant $c$ such that 

\begin{equation}\label{constantc}
\phi^*(\Kill_{\tilde{\mathfrak{g}}})(X, Y) = c\  \Kill_{\mathfrak{g}}(X, Y)
\end{equation}

Let \(\omega\) be the invariant \K\ form on \(\mathcal C\) determined by \(Z_0\); i.e.
\[
\omega_o(X,Y)=\Kill_{\mathfrak{g}}([Z_0,X],Y),\qquad X,Y\in\mathfrak{m}.
\]
Similarly \(d\phi(Z_0)\) determines an invariant form
\(\hat\omega\) on \(\tilde {\mathcal C}\) with
\[
\hat\omega_{\tilde o}(\tilde X,\tilde Y)=
  \Kill_{\tilde{\mathfrak{g}}}\!\bigl([d\phi(Z_0),\tilde X],\tilde Y\bigr),
  \qquad \tilde X,\tilde Y\in\tilde{\mathfrak{m}}.
\]
Set \(\tilde\omega:=c^{-1}\hat\omega\).
Then  that by  \eqref{constantc} and \eqref{inclm},  one easily gets:

\begin{equation}\label{lambdaZU(N)3}
\Phi^*(\tilde \omega)_o(X, Y) = \omega_o(X, Y)
\end{equation}
for every $X, Y \in \mathfrak{m}$.
Due the fact that $\Phi:\mathcal C\to \tilde {\mathcal C}$ is equivariant and 
the \K\ metrics are invariant we get that $\Phi$ is symplectic,
i.e. $\Phi ^*\tilde\omega=\omega$.

\smallskip

\paragraph{\bf Step 5 (holomorphicity of \(\Phi\))}
Fix a black root \(\alpha\) and its root vector \(E_\alpha\).
By Step 3
\[
d\phi(E_\alpha)=\sum_{j=1}^{k} c_j E_{\tilde\alpha_j},
\quad
\operatorname{sgn}\!\bigl(\tilde\alpha_j(i d\phi(Z_0))\bigr)=
\operatorname{sgn}\!\bigl(i\,\alpha(Z_0)\bigr).
\]
By \eqref{defJ} the complex structures act by
\[
J(E_\alpha)=
  i\,\operatorname{sgn}\!\bigl(\alpha(iZ_0)\bigr)E_\alpha,\qquad
\tilde J(E_{\tilde\alpha_j})=
  i\,\operatorname{sgn}\!\bigl(\tilde\alpha_j(i d\phi(Z_0))\bigr)E_{\tilde\alpha_j}.
\]
Therefore
\[
d\phi\bigl(JE_\alpha\bigr)=
  i\,\operatorname{sgn}\!\bigl(\alpha(i Z_0)\bigr)
  \sum_{j} c_j E_{\tilde\alpha_j}
  =\tilde J\Bigl(\sum_{j} c_j E_{\tilde\alpha_j}\Bigr)
  =\tilde J\bigl(d\phi(E_\alpha)\bigr).
\]
Linearity gives $d\phi \circ J=\tilde J\circ d\phi$ on
\(\mathfrak{m}^{\C}\);  $G$-equivariance extends it over \(\mathcal C\),  i,e, 
$d\Phi\circ J=\tilde J\circ d\Phi$, so \(\Phi\) is holomorphic.

\smallskip 

\paragraph{\bf Step 6 (isometry)}
A holomorphic symplectomorphism between \K\  manifolds preserves the metric; hence
\[
\Phi^{*}\tilde g = g,
\]
and the proof  when $\mathcal C$ is irreducible is  complete.

\smallskip

\paragraph{\bf Step 7 ($\mathcal C$  reducible)}
If the flag manifold \((\mathcal C=G/K,g)\) is \emph{reducible}, namely 
\[
 (\mathcal C,g)\;=\;(G_1/K_1,g_1)\times\cdots\times(G_r/K_r,g_r),
\]
with each \((G_i/K_i,g_i)\), \(i=1,\dots,r\), irreducible, then by the previous steps we already have a holomorphic and isometric embedding
\[
\mathcal C
\;\longrightarrow\;
 SU(N_1)/\tilde K_1 \times \cdots \times SU(N_r)/\tilde K_r .
\]
Consequently, to obtain an \emph{irreducible} ambient space of the form
\(\tilde G/\tilde K\) with
\(\tilde G\in\{\mathrm{SU}(N),\mathrm{SO}(N),\mathrm{Sp}(N)\}\),
it suffices to construct an embedding
\[
SU(N_1)/\tilde K_1 \times \cdots \times SU(N_r)/\tilde K_r
\;\hookrightarrow\;
SU(N)/\tilde K,
\]
for suitable integers \(N\) and a subgroup 
\(\tilde K\); once this is achieved, Step 1 applies verbatim and yields the desired result.
This can be achieved by arguments analogous to those used in the irreducible case, now fully explicit. We merely sketch the proof.

Let us consider the natural block embedding
$$\psi: SU(N_1) \times \cdots \times SU(N_r) \rightarrow SU(N), \ \ (A_1, \dots, A_r) \mapsto 
\begin{pmatrix}
A_1 & 0   & \cdots &        & 0      \\
0   & A_2 &        & \cdots & 0      \\
     &     & \ddots &        &        \\
0   & 0   &        & \cdots & 0      \\
0   & 0   &        & 0      & A_r
\end{pmatrix}$$

where $N = N_1 + \cdots + N_r$.

By Remark \ref{rmknew}
we can take as Cartan subalgebra of 
$$(\mathfrak{su}_{N_1} \oplus  \cdots \oplus \mathfrak{su}_{N_r} )^{\C} = \mathfrak{sl}_{N_1}(\C) \oplus  \cdots \oplus \mathfrak{sl}_{N_r}(\C)$$ the standard subalgebra given by   the set $\mathfrak{h}_1 \oplus \cdots \oplus \mathfrak{h}_r$ of the block matrices $H = \diag(H^1, \dots, H^r)$ where $H^i$ is diagonal of order $N_i$ with $\tr(H^i)=0$, and we clearly have $d \psi(\mathfrak{h}_1 \oplus \cdots \oplus \mathfrak{h}_r) \subseteq \mathfrak{h}$ where $\mathfrak{h}$ is the standard Cartan subalgebra of $\mathfrak{su}_{N}^{\C} = \mathfrak{sl}_{N}(\C)$ consisting of the complex diagonal matrices with null trace (in the terminology introduced above, $\mathfrak{h}_1 \oplus \cdots \oplus \mathfrak{h}_r$ and $\mathfrak{h}$ are aligned).
On the one hand, the roots $\tilde \alpha$ of $\mathfrak{sl}_{N}(\C)$ with respect to $\mathfrak{h}$ are the functionals $\tilde \alpha = \tilde \alpha_{ij}$ defined by $\tilde \alpha_{ij}(H) = H_{ii} - H_{jj}$ for some $i, j = 1, \dots, N$, $i \neq j$, and the root vector $E_{\tilde \alpha}$ corresponding to $\tilde \alpha = \tilde \alpha_{ij}$ can be choosen as  the matrix which has $1$ at the entry $ij$ and $0$ otherwise.
On the other hand, the roots $\alpha$ of $\mathfrak{sl}_{N_1}(\C) \oplus  \cdots \oplus \mathfrak{sl}_{N_r}(\C)$ with respect to $\mathfrak{h}_1 \oplus \cdots \oplus \mathfrak{h}_r$  are the functionals $\alpha = \alpha^k_{ij}$ defined for every $H = \diag(H^1, \dots, H^r)$ by $\alpha^k_{ij}(H) = H^k_{ii} - H^k_{jj}$ for some $k = 1, \dots, r$ and $i,j = 1, \dots, N_k$, $i \neq j$, i.e.
\begin{equation}\label{H1Hk}
\alpha^k_{ij}(H^1, \dots, H^r) = \alpha_{ij}(H^k)
\end{equation}
With a slight abuse of notation, for the sake of simplicity we will write this equality also as $\alpha(H^1, \dots, H^r) = \alpha(H^k)$ whenever $\alpha = \alpha^k_{ij}$.
The root vector corresponding to $\alpha = \alpha^k_{ij}$ can be chosen as the diagonal block matrix 
$$E_{\alpha} = \diag(0, \dots, E_{ij}^k, \dots, 0),$$
where $E_{ij}^k$ is the matrix in the block $\mathfrak{su}_{N_k}^{\C}$ having $1$ as entry $ij$ and $0$ otherwise.

From this explicit description of the roots it is clear that for every root $\alpha$ of $\mathfrak{sl}_{N_1}(\C) \oplus  \cdots \oplus \mathfrak{sl}_{N_r}(\C)$ there exists exactly one $\psi$-related root $\tilde \alpha$ of $\mathfrak{sl}_{N}(\C)$, i.e. such that $\tilde \alpha(d \psi(H)) = \alpha(H)$ for every $H \in \mathfrak{h}_1 \oplus \cdots \oplus \mathfrak{h}_r$, and that $d \psi(E_{\alpha}) = E_{\tilde \alpha}$ for the unique root $\tilde \alpha$ which is $\psi$-related to $\alpha$.

Now, assume that $\tilde K_j = C_{SU(N_j)}(Z_j)$ for some $Z_ j \in \mathfrak{su}(N_j)$, and that each factor $SU(N_j)/\tilde K_j$ is endowed with the Kahler structure determined by $Z_j$ as in the previous steps. This means that $SU(N_1)/\tilde K_1 \times \cdots \times SU(N_r)/\tilde K_r$ is endowed with the product Kahler form $\omega = \omega^1 + \cdots + \omega^r$ such that its restriction to $o = ([e]_{K_1}, \dots, [e]_{K_r})$ is

\begin{equation}\label{prodKform}
\omega_o(X, Y) = \sum_{j=1}^r \omega_o^j(X_j, Y_j) = \sum_{j=1}^r \Kill_{SU(N_j)} ([Z_j, X_j], Y_j)
\end{equation}

for every 
$$X = \sum_{j=1}^r X_j, \ Y = \sum_{j=1}^r Y_j  \in  \mathfrak{m}_1 \oplus \cdots \oplus \mathfrak{m}_r \simeq T_o(SU(N_1)/\tilde K_1 \times \cdots \times SU(N_r)/\tilde K_r),$$ where $\mathfrak{m}_i$ denotes as above the orthogonal complement (with respect to the Killing form) of $\tilde{\mathfrak{k}}_i$ in $\mathfrak{su}(N_i)$.
Moreover, the complex structure $J$ on $SU(N_1)/\tilde K_1 \times \cdots \times SU(N_r)/\tilde K_r$ is defined for every black root $\alpha = \alpha^k_{ij}$, $s = 1, \dots, N_j$, by 
\begin{equation}\label{H1Hk2}
J(E_{\alpha}) = i \cdot \sgn(i \alpha(Z_1, \dots, Z_r)) E_{\alpha} = i \cdot \sgn(i \alpha(Z_k)) E_{\alpha}
\end{equation}
where we have used (\ref{H1Hk}).

Now, let

$$\tilde Z := d \psi(c_1 Z_1, \dots, c_r Z_r), \ \ c_j = \frac{N}{N_j}$$

and let $\tilde K = C_{SU(N)}(\tilde Z)$. Let us endow $SU(N)/\tilde K$ with the Kahler form $\tilde \omega$ and the complex structure $\tilde J$ defined by $\tilde Z$.
Notice that if $\alpha = \alpha^k_{ij}$ is a black (resp. white) root, then its $\psi$-related root $\tilde \alpha$ is also black (resp. white). Indeed, by definition $\alpha$ is black (resp. white) if and only if $\alpha(Z_1, \dots, Z_r) = \alpha(Z_k) \neq 0$ (resp. $\alpha(Z_k) = 0$). Then, again by (\ref{H1Hk}) we have
\begin{equation}\label{H1Hk3}
\tilde \alpha(\tilde Z) = \tilde \alpha(d \psi(c_1 Z_1, \dots, c_r Z_r)) = \alpha(c_1 Z_1, \dots, c_r Z_r) = \alpha(c_k Z_k) = c_k \alpha(Z_k)
\end{equation}
and then the claim follows. As in the irreducible case, this implies that $d \phi(\mathfrak{m}_1 \oplus \cdots \oplus \mathfrak{m}_r ) \subseteq \mathfrak{m}$.
Finally , it is easy to see that the map
$$\Psi: SU(N_1)/\tilde K_1 \times \cdots \times SU(N_r)/\tilde K_r \rightarrow SU(N)/\tilde K$$
$$\Psi([A_1]_{K_1}, \dots, [A_r]_{K_r}) = [\psi(A_1, \dots, A_r)]_{\tilde K}$$
 is well-defined and injective, and one can prove by similar arguments as in the irreducible case that $\Psi$ preserves both the symplectic and the complex structures.
This concludes Step 7 and thereby completes the proof of the theorem.
 \end{proof}

\medskip

We are now in a position to prove Theorem~\ref{mainteorrel}.  
We need a  lemma.

\begin{lem}\label{lemmaricciflat}
Let \(F\) be a Ricci-flat \K\ manifold admitting a holomorphic  isometric  immersion
$F \longrightarrow \mathcal E \times \Omega,$
where \(\mathcal E\) is a flat complex Euclidean space and \(\Omega\) is a bounded homogeneous domain.  
Then \(F\) is flat.
\end{lem}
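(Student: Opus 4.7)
The plan is to combine the Gauss equation for K\"ahler submanifolds with the fact that the Ricci form of any homogeneous K\"ahler metric on a bounded homogeneous domain is negative definite: this will force the immersion to factor through the flat factor, after which a second use of the Gauss equation yields flatness.

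Write the given holomorphic isometric immersion as $\iota = (\iota_1, \iota_2) : F \to \mathcal E \times \Omega$. Since $\mathcal E$ is flat, the ambient Ricci form reduces to the pull-back of $\Ric_\Omega$ via the projection $\pi_\Omega$, so $\iota^{*}\Ric_{\mathcal E \times \Omega} = \iota_2^{*}\Ric_\Omega$. The Gauss equation for a K\"ahler submanifold, in Hermitian form, reads
\[
  \Ric_F \;=\; \iota^{*}\Ric_{\mathcal E \times \Omega} \;-\; Q,
\]
where $Q$ is the pointwise positive semi-definite Hermitian $(1,1)$-form built in the standard way from the second fundamental form of the immersion. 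Since $\Ric_F = 0$ by hypothesis, this gives $Q = \iota_2^{*}\Ric_\Omega$; in particular $\iota_2^{*}\Ric_\Omega \geq 0$ as a Hermitian form on $F$.

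Next I invoke the classical fact that the Ricci form of any homogeneous K\"ahler metric on a bounded homogeneous domain is strictly negative definite (immediate for the Bergman metric on a bounded symmetric domain, and in general extracted from the Siegel-domain / Koszul-form presentation of homogeneous bounded domains). Consequently $\iota_2^{*}\Ric_\Omega \leq 0$, and combined with the previous inequality we obtain $\iota_2^{*}\Ric_\Omega \equiv 0$. Strict negative definiteness of $\Ric_\Omega$ together with holomorphicity of $\iota_2$ then forces $d\iota_2 \equiv 0$, so $\iota_2$ is constant on each connected component of $F$; equivalently, $F$ lies inside a single slice $\mathcal E \times \{p_0\}$.

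At this point $F$ is a Ricci-flat K\"ahler submanifold of the flat K\"ahler manifold $\mathcal E$. Re-running the Gauss identity in this new situation the ambient Ricci vanishes identically, so $Q \equiv 0$, forcing the second fundamental form of $F$ in $\mathcal E$ to vanish; hence $F$ is totally geodesic in $\mathcal E$, i.e.\ an affine complex subspace, and therefore flat (alternatively one applies Umehara's theorem~\cite{UmearaEinstein} directly at this last step). The main obstacle in this outline is the negativity statement for $\Ric_\Omega$: while immediate in the symmetric case, for an arbitrary homogeneous K\"ahler metric on a homogeneous bounded domain this requires the structural theory of Siegel domains of the second kind, and locating a clean reference or giving a self-contained argument is where most of the preparatory work lies.
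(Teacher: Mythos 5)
There is a genuine gap, and it is not where you think it is. The identity you call the Gauss equation, $\Ric_F=\iota^{*}\Ric_{\mathcal E\times\Omega}-Q$ with $Q\ge 0$, is not correct for a K\"ahler submanifold. What the Gauss equation actually gives is
\[
\Ric_F(X,\overline X)\;=\;\sum_{i} R_M\bigl(X,\overline X,e_i,\overline{e_i}\bigr)\;-\;\sum_i\bigl|\sigma(X,e_i)\bigr|^{2},
\]
where the trace runs only over a unitary frame $\{e_i\}$ of $T_pF$, not of $T_pM$. The difference between this partial trace and $\iota^{*}\Ric_M(X,\overline X)$ is the sum of the ambient holomorphic bisectional curvatures $R_M(X,\overline X,N_\nu,\overline{N_\nu})$ over the normal directions, a term with no a priori sign. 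So your $Q$ is positive semi-definite only if one already controls the sign of the bisectional curvature of the ambient metric. This is exactly the point: for a general homogeneous K\"ahler metric on a (possibly non-symmetric) homogeneous bounded domain, non-positivity of the holomorphic bisectional (indeed even of the holomorphic sectional) curvature fails, so the argument cannot be rescued by establishing negativity of $\Ric_\Omega$ alone --- which, incidentally, is the more tractable of the two issues, since the Ricci form of any invariant K\"ahler metric on $\Omega$ is (up to sign) the Koszul form and is negative definite. Consequently your deduction $Q=\iota_2^{*}\Ric_\Omega$, and everything after it, does not follow.

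The paper's proof is designed precisely to sidestep this. It does not work in $\mathcal E\times\Omega$ directly: it first composes with Ishi's holomorphic isometric immersion of $\Omega$ into a Siegel domain $\mathfrak S$ of type III, which by Kor\'anyi--Wolf is K\"ahler-equivalent to a bounded symmetric (Cartan) domain; \emph{there} the Bergman metric has non-positive holomorphic bisectional curvature, so the Gauss equation applied to $F\to\mathcal E\times\mathfrak S$ forces the second fundamental form to vanish, i.e.\ $F$ is totally geodesic. Even then, total geodesicity alone does not immediately give flatness; the paper concludes via local homogeneity of totally geodesic submanifolds of locally homogeneous spaces together with the Alekseevsky--Kimel'fel'd--Spiro theorem (Ricci-flat locally homogeneous implies flat). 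If you want to repair your argument, you should insert the immersion $\Omega\hookrightarrow\mathfrak S$ as the first step and replace the sign hypothesis on $\Ric_\Omega$ by the sign of the bisectional curvature of $\mathfrak S$.
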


\begin{proof}
Due  to the work of Ishi \cite[Th. 1]{Ishi24} we know that every bounded homogeneous domain \(\Omega\) admits a holomorphic and isometric immersion into a Siegel upper half-space \(\mathfrak S\) of type~III.  
Classical results of Kor\'anyi-Wolf \cite{KW65} show that \(\mathfrak S\) is \K\ equivalent, via a Cayley transformation, to a bounded Cartan domain of the same type, whose Ricci tensor is non-positive.  
Then, by the Gauss equation it follows that  $F\rightarrow \mathcal E \times \Omega\rightarrow 
 \mathcal E\times \mathfrak S$ is totally geodesic. 
Since a totally geodesic submanifold of a locally homogeneous Riemannian manifold is also locally homogeneous, a well- known theorem of Alekseevsky-Kimel’fel’d-Spiro (see \cite{AlekseevskyKimelfeld75} and \cite{Spiro93}) implies 
$F$ is flat.
\end{proof}


\begin{proof}[Proof of Theorem~\ref{mainteorrel}]
We first show that none of the pairs 
\begin{equation}\label{pairs}
(\mathcal E,\ \mathcal C\times\Omega),\qquad
(\Omega,\ \mathcal E\times\mathcal C),\qquad
(\mathcal E\times\Omega,\  \mathcal C)
\end{equation}
are relatives. Subsequently, we prove that if any of them are weak relatives, then they must in fact be relatives.
By Theorem~\ref{mainteoremb}, the flag manifold $\mathcal C$ admits a holomorphic and isometric embedding into a classical flag manifold $\widetilde{\mathcal C}$. Therefore, it suffices to examine the corresponding pairs
\[
(\mathcal E,\ \widetilde{\mathcal C}\times\Omega),\qquad
(\Omega,\ \mathcal E\times \widetilde{\mathcal C}),\qquad
(\mathcal E\times\Omega,\  \widetilde{\mathcal C}).
\]
In each case, the rigidity theorem of Loi-Mossa~\cite[Thm~1.2]{LMrighom} (valid for classical flag manifolds) rules out the existence of a positive-dimensional common Kähler submanifold; hence, no relatives exist.

Assume now that one of the pairs in \eqref{pairs} consists of weak relative \K\ manifolds. By
the very definition of weak relatives (see (2) of Definition \ref{defrel} in the Introduction), there exist two locally isometric \K\ manifolds $M_1$ and $M_2$, together with holomorphic isometries $\varphi_i \colon M_i \to S_i$, for \(i = 1,2\), where \((S_1, S_2)\) is one of the pairs in \eqref{pairs}. Let $\varphi \colon M_1 \to M_2$ be a local isometry between the two. Since we are only interested in the local behavior, we may assume without loss of generality that $\varphi$ is a global isometry.
We aim to modify $\varphi$ so as to obtain a holomorphic isometry $\tilde{\varphi} \colon M_1 \to M_2$, which would imply that \(S_1\) and \(S_2\) are relatives, contradicting the first part of the proof.

Let $M_1 = F \times N_1 \times \cdots \times N_k$
be the de Rham decomposition of $M_1$, where $F$ collects all Ricci-flat factors. We claim that the factor $F$ cannot appear; that is, $\dim F = 0$.
Indeed, since \(S_1\) is either \(\mathcal{E}\), \(\Omega\), or \(\mathcal{E}\times\Omega\), and \(F\) admits a \K\ immersion into \(M_1\) and hence into \(S_1\), Lemma~\ref{lemmaricciflat} implies that \(F\) is flat.
If \(S_1 = \Omega\), then \(F\) cannot appear, since \(\Omega\) is not relative to \(\mathcal{E}\), by \cite{PRIMO}.  
If instead \(S_1 = \mathcal{E}\) or \(\mathcal{E} \times \Omega\), then \(S_2\) is respectively 
\(\mathcal{C} \times \Omega\) or \(\mathcal{C}\). But then \(\varphi(F)\) is a flat Kähler submanifold of \(M_2\) and hence of \(S_2\), which again contradicts the first part, since \(\mathcal{E}\) is not relative to either \(\mathcal{C} \times \Omega\) or \(\mathcal{C}\). This proves the claim.
Now, using \cite[Lemma~6]{Placini2023}, which states that any isometry between irreducible, non-Ricci-flat Kähler manifolds is either holomorphic or anti-holomorphic, we can proceed as in the proof of \cite[Thm~3]{Placini2023} and modify $\varphi$ on each irreducible factor $N_j$ to obtain the desired holomorphic isometry $\tilde\varphi \colon M_1 \to M_2$.
The theorem is thus proved.
\end{proof}

By the proof of Theorem \ref{mainteorrel} and by Lemma \ref{lemmaricciflat}
we obtain the following corollary.

\begin{cor}
Let $S$ be a \K\ manifold that is weak relative to a bounded homogeneous domain $\Omega$. Then $S$ and $\Omega$ are relatives. 
\end{cor}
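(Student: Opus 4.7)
The plan is to replay the final portion of the proof of Theorem~\ref{mainteorrel}, taking $\Omega$ as the ``rigid'' target and exploiting the absence of a flag-manifold factor to simplify. Unfolding the definition of weak relatives, one obtains \K\ manifolds $M_1,M_2$, an isometry $\varphi\colon M_1\to M_2$ (which, since the problem is local, may be assumed global after shrinking), and holomorphic isometries $\varphi_1\colon M_1\to S$ and $\varphi_2\colon M_2\to\Omega$. The goal is to produce a \emph{holomorphic} isometry $\tilde\varphi\colon M_1\to M_2$: once such a $\tilde\varphi$ exists, the pair $\bigl(\varphi_1,\varphi_2\circ\tilde\varphi\bigr)$ exhibits $M_1$ as a common \K\ submanifold of $S$ and $\Omega$, proving that they are relatives.

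Next I would kill the Ricci-flat part. Write the de~Rham decomposition
\[
   M_2 \;=\; F\times N_1\times\cdots\times N_k,
\]
with $F$ the Ricci-flat factor and each $N_j$ irreducible and not Ricci-flat. Restricting $\varphi_2$ gives a holomorphic isometric immersion $F\hookrightarrow\Omega$; viewing $\Omega$ as the trivial product $\{0\}\times\Omega$ with $\mathcal E=\{0\}$, Lemma~\ref{lemmaricciflat} forces $F$ to be flat. But a bounded homogeneous domain contains no positive-dimensional flat \K\ submanifold (by \cite{PRIMO}, i.e.\ case (i) of Theorem~\ref{mainteorrel} specialised to $\mathcal C=\mathrm{pt}$), so $\dim F=0$. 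Thus $M_2$ is a product of irreducible non-Ricci-flat factors, and the local isometry $\varphi$ transports this decomposition to $M_1$.

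Finally, I would apply \cite[Lemma~6]{Placini2023}: any isometry between irreducible non-Ricci-flat \K\ manifolds is either holomorphic or antiholomorphic. Proceeding exactly as in \cite[Theorem~3]{Placini2023}, on each factor $N_j$ where the corresponding component of $\varphi$ happens to be antiholomorphic, I would precompose with complex conjugation in the appropriate factor of $M_1$ (using the corresponding conjugate complex structure, which is still locally isometric); the modified map $\tilde\varphi\colon M_1\to M_2$ is then holomorphic on every factor, and hence globally a holomorphic isometry.

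The only real obstacle is the elimination of the Ricci-flat factor $F$, and this is precisely the point where Lemma~\ref{lemmaricciflat} and the classical result of \cite{PRIMO} are indispensable; once $F$ is shown to be trivial, the Placini machinery turns the isometry into a holomorphic one mechanically.
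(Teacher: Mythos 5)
Your proposal is correct and follows essentially the same route the paper intends: it replays the second half of the proof of Theorem~\ref{mainteorrel}, using Lemma~\ref{lemmaricciflat} (with $\mathcal E$ a point) together with \cite{PRIMO} to kill the Ricci-flat de~Rham factor of the manifold mapping into $\Omega$, and then the argument of \cite[Lemma~6 and Thm.~3]{Placini2023} to upgrade the local isometry to a holomorphic one. You correctly identified the one necessary adjustment, namely that the de~Rham decomposition must be applied on the $\Omega$-side since $S$ is arbitrary.
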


A similar result holds true if one replaces $\Omega$ by a 
 projective \K\ manifold (see \cite[Th.~3]{Placini2023}). 
Extending this result to flag manifolds is a natural and intriguing direction, which we plan to explore in future work.

\end{document}